\newtheorem{definition}{\bf Definition}[section]
\newtheorem{proposition}{\bf Proposition}[section]
\newtheorem{lemma}{\bf Lemma}[section]
\newtheorem{theorem}{\bf Theorem}[section]
\newtheorem{remark}{Remark}[section]
\newcounter{A}
\newenvironment{A}{\refstepcounter{A}\equation}{\tag{A}\endequation}
\newcounter{B}
\newenvironment{B}{\refstepcounter{B}\equation}{\tag{B}\endequation}
\title{A note on spectrum and quantum dynamics}
\author{Moacir Aloisio}
\date{May 2019}
\begin{document}

\maketitle

\begin{abstract} We show, in the same vein of Simon's Wonderland Theorem, that, typically in Baire's sense, the rates with whom the solutions of the Schr\"odinger equation escape, in time average, from every finite-dimensional subspace, depend on subsequences of time going to infinite.
\end{abstract} 

\

\noindent{\bf Keywords}: spectral theory,  Schr\"odinger equation, decaying rates. 

\

\noindent{\bf  AMS classification codes}: 46L60 (primary), 81Q10 (secondary).

\renewcommand{\thetable}{\Alph{table}}

%%%%%%%%%%%%%%%%%%%%%%%%%%%%%%%%%%%%%%%%%%%%%%%%%%%%%%%%%%%%%%%%%%%%%%%%%%%--Introduction--%%%%%%%%%%%%%%%%%%%%%%%%%%%%%%%%%%%%%%%%%%%%%%%%%%%%%%%%%%%%%%%%%%%%%%%%%%%%%%%%%%

\section{Introduction and results} 

\noindent There is a vast literature concerning the large time asymptotic behaviour of wave packet solutions of the Schr\"odinger equation
\begin{equation}\label{SE}
\begin{cases} \partial_t \xi = -iT\xi, ~t \in {\mathbb{R}}, \\ \xi(0) = \xi, ~~~\xi \in {\cal{H}},  \end{cases}    
\end{equation}
where $T$ is a self-adjoint operator in a separable complex Hilbert space $\cal{H}$. Namely, the relations between the quantum dynamics of solutions of (\ref{SE}) and the spectral properties of $T$ are a classical subject of quantum mechanics. In this context, we refer to \cite{Aloisio,Barbaroux1,Barbaroux2,Carvalho1,Carvalho2,DamanikDDP,Germinet,Guarneri1,Guarneri2,Last,StolzIntr}, among others. 

%%%%%%%%%%%%%%%%%%%%%%%%%%%%%%%%%%%%%%%%%%%%%%%%%%%%%%%%%%%%%%%%%%%%%--Spectrum--and--dynamics--%%%%%%%%%%%%%%%%%%%%%%%%%%%%%%%%%%%%%%%%%%%%%%%%%%%%%%%%%%%%%%%%%%%%%%%%%%%% 

\subsection{Spectrum and dynamics} 

\noindent We recall that, for every $\xi \in \cal{H}$, the one-parameter unitary group ${\mathbb{R}} \ni t \mapsto e^{-itT}$ is so that the curve $e^{-itT}\xi$, in some sense, solves (\ref{SE}). Next we list two quantities usually considered to probe the large time behaviour of the dynamics $e^{-itT}\xi$. 

\begin{enumerate}
\item Let $A$ be a positive operator such that, for each ~$t \in \mathbb{R}$, ~$e^{-itT}{\cal{D}}(A) \subset {\cal{D}}(A)$. For each $\xi \in  {\cal{D}}(A)$, the expectation value of $A$ in the state $\xi$ at time $t$ is defined as

\begin{A}\label{A}
A_\xi^T := \langle e^{-itT}\xi, Ae^{-itT}\xi \rangle.    
\end{A}
\item The time average for $A$ is defined as

\begin{B}\label{B}
\langle A_\xi^T \rangle_t := \frac{1}{t} \int_0^t \langle e^{-isT}\xi, Ae^{-isT}\xi \rangle \, ds.    
\end{B}
\end{enumerate}

In this paper, we are interested in studying the relations between the large time behaviour of the time average of expectation value of $A$ in the state $\xi$ and the spectral properties of the spectral measure $\mu_\xi^T$ of $T$ associated with $\xi$. In this context, firstly we refer to notorious RAGE Theorem, named after Ruelle, Amrein, Georgescu, and Enss \cite{Oliveira}.

\begin{theorem}[RAGE Theorem] Let $A$ be a compact operator on $\cal{H}$. For every $\xi \in \cal{H}$,
\[\displaystyle\lim_{t \to \infty} \langle \vert A_\xi^T \vert \rangle_t = 0,\]
if, and only if, $\mu_\xi^T$ is purely continuous.
\end{theorem}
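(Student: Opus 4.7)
My plan is to reduce both directions to the behaviour of matrix elements of the unitary group $e^{-isT}$ and then invoke Wiener's theorem on Ces\`aro averages of squared Fourier transforms.

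For the direction ``$\mu_\xi^T$ purely continuous $\Rightarrow$ the time average vanishes'', I would first reduce to rank-one operators. Since every compact $A$ is an operator-norm limit of finite-rank operators, write $A=A_N+R_N$ with $\|R_N\|\to 0$ and $A_N$ of finite rank; then
$$\frac{1}{t}\int_0^t|A_\xi^T(s)|\,ds \;\leq\; \frac{1}{t}\int_0^t|(A_N)_\xi^T(s)|\,ds \;+\; \|R_N\|\,\|\xi\|^2,$$
so it is enough to treat $A_N$. Writing $A_N=\sum_{k=1}^N s_k\,|\eta_k\rangle\langle\zeta_k|$ by singular-value decomposition, applying the triangle inequality and Cauchy--Schwarz in the time variable shows that matters reduce to proving
$$\lim_{t\to\infty}\frac{1}{t}\int_0^t|\langle\eta,e^{-isT}\xi\rangle|^2\,ds=0$$
for every $\eta\in\cal{H}$. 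By the spectral theorem, $s\mapsto\langle\eta,e^{-isT}\xi\rangle$ is the Fourier transform of the complex spectral measure $\mu_{\eta,\xi}^T$, which is absolutely continuous with respect to $\mu_\xi^T$ and therefore continuous. Wiener's theorem, $\lim_{t\to\infty}\frac{1}{t}\int_0^t|\hat\mu(s)|^2\,ds=\sum_\lambda|\mu(\{\lambda\})|^2$, then yields the desired vanishing. Finally, taking first $t\to\infty$ (for fixed $N$) and then $N\to\infty$ closes the argument.

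For the converse I would argue by contrapositive. If $\mu_\xi^T$ has an atom at some $\lambda_0$, put $\eta:=P_{\{\lambda_0\}}\xi\neq 0$, with $P_{\{\lambda_0\}}$ the corresponding spectral projection, and take $A:=|\eta\rangle\langle\eta|$, a positive rank-one (hence compact) operator. The eigenrelation $e^{-isT}\eta=e^{-is\lambda_0}\eta$ gives $A_\xi^T(s)=|\langle\eta,e^{-isT}\xi\rangle|^2=\|\eta\|^4$ for every $s$, so the time average equals $\|\eta\|^4>0$, contradicting the hypothesis.

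The main obstacle is the forward direction: one must combine finite-rank approximation, the Cauchy--Schwarz bookkeeping, and Wiener's theorem in such a way that the error terms are controlled uniformly in $N$ before letting $t\to\infty$. The converse, by contrast, is essentially immediate once the right rank-one test operator has been identified.
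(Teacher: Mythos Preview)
The paper does not give its own proof of the RAGE Theorem; the result is quoted as background and attributed to de Oliveira's monograph. Your outline is precisely the standard textbook argument: approximate the compact operator by finite-rank ones, use Cauchy--Schwarz in the time variable to reduce to Ces\`aro means of $|\langle\eta,e^{-isT}\xi\rangle|^2$, and finish with Wiener's theorem on the (complex) spectral measure $\mu_{\eta,\xi}^T\ll\mu_\xi^T$; for the converse, test against the rank-one projection onto an eigenvector. This is correct, and there is nothing in the paper to compare against.

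One comment on the converse. As literally written in the paper, $A$ is fixed before the quantifier over $\xi$, and in that reading the ``only if'' direction fails (take $A=0$). Your contrapositive constructs $A=|\eta\rangle\langle\eta|$ from $\xi$ and $T$, so what you actually establish is the standard form of RAGE: $\mu_\xi^T$ is purely continuous if and only if the time average vanishes for \emph{every} compact $A$. That is the intended statement, so your reading is the right one; just be aware that the dependence of your test operator on $\xi$ means you are not proving the equivalence for a single prescribed $A$.
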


We note that since any projector onto a finite-dimensional subspace of $\cal{H}$ satisfies the hypotheses of RAGE Theorem, initial states whose spectral measures are purely continuous can be interpreted as those whose trajectories escape, in time average, from every finite-dimensional subspace.

%%%%%%%%%%%%%%%%%%%%%%%%%%%%%%%%%%%%%%%%%%%%%%%%%%%%%%%%%%%%%%%%%%%%%--Fine--scales--of--decay--%%%%%%%%%%%%%%%%%%%%%%%%%%%%%%%%%%%%%%%%%%%%%%%%%%%%%%%%%%%%%%%%%%%%%%%%%%%% 

\subsection{Fine scales of decay} 

\noindent A complete metric space $(X,d)$ of self-adjoint operators, acting in the separable complex Hilbert space $\mathcal{H}$, is said to be regular if convergence with respect to~$d$ implies strong resolvent convergence of operators (see Definition \ref{convergencedef} ahead). One of the results stated in~\cite{Simon}, the so-called Wonderland Theorem, says that, for some regular spaces~$X$, $\{T \in X \mid T$ has purely singular continuous spectrum$\}$ is a dense $G_\delta$ set in $X$. Hence, for these spaces, by RAGE Theorem, for each compact operator $A$ and each $\xi \in \cal{H}$,  $\{T \in X \mid \displaystyle\lim_{t \to \infty} \langle \vert A_\xi^T \vert \rangle_t = 0 \}$ contains a dense $G_\delta$ set in $X$. In this work we present refinements of this result for three different classes of self adjoint operators. 

\begin{enumerate}

\item {\bf General operators.} Let $a>0$ and endow $X_a:= \{ T \mid T$  self-adjoint, $\Vert T\Vert  \leq a \}$ with the metric 
\[d(T,T'):= \sum_{j=0}^\infty \min (2^{-j},\Vert (T-T')e_j\Vert),\]
where $(e_j)$ is an orthonormal basis of $\cal{H}$. Then, $X_a$ is a complete metric space such that metric convergence implies strong resolvent convergence.

\ 

\item {\bf Jacobi matrices.} For every fixed $b > 0$, let the family of Jacobi matrices, $M$, given on ${\ell}^2({{\mathbb{Z}}})$ by the action
\[(Mu)_j := u_{j-1} + u_{j+1} + v_ju_j,\]
where $(v_j)$ is a sequence in ${\ell}^\infty({\mathbb{Z}})$, such that, for each $j \in {\mathbb{Z}}$, $\vert v_j \vert \leq b$. Denote by $X_b$ the set of
these matrices endowed with the topology of pointwise convergence on $(v_j)$. Then, one has that $X_b$ is (by Tychonoff's Theorem) a compact metric space so that metric convergence implies strong resolvent convergence.

\ 

\item  {\bf Schr\"{o}dinger operators.} Fix $C>0$ and consider the family of Schr\"odinger operators, $H_V$,  defined in the Sobolev space ${\mathcal{H}}^2({{\mathbb{R}}})$ by the action 
\[(H_V u)(x) := -\triangle u(x) + V(x)u(x),\]
with  $V \in {\cal{B}}^\infty({{\mathbb{R}}})$ (the space of bounded Borel functions) so that, for each $x \in {\mathbb{R}}$, $ \vert V(x) \vert \leq C$. Denote by $X_C$ the set of these operators endowed with the metric
\[d(H_V,H_U):=\displaystyle\sum_{j= 0}^\infty \min(2^{-j},\Vert V-U\Vert_j),\]
 where $\Vert V-U\Vert_j:= \displaystyle\sup_{x\in B(0,j)}\vert V(x)-U(x)\vert$. Then, one has that $X_C$ is (again by Tychonoff's Theorem) a compact metric space such that convergence metric implies strong resolvent convergence (Definition \ref{convergencedef}). Namely, if $H_{V_k} \rightarrow H_V$ in $X_C$, then, for every $x \in {\mathbb{R}}$, one has that $\displaystyle\lim_{k \to \infty} V_k(x) = V(x)$. Therefore, for each $u \in {\mathrm L}^2({\mathbb{R}})$, by the second resolvent identity and  dominated convergence, 
\begin{eqnarray*}\| (R_i(H_{V_k}) - R_i(H_{V}))u \|_{{\mathrm L}^2({\mathbb{R}})} &=& \| R_i(H_{V_k})(V_k - V)R_i(H_V)u \|_{{\mathrm L}^2({\mathbb{R}})}\\
&\leq& \|(V_k - V)R_i(H_V)u \|_{{\mathrm L}^2({\mathbb{R}})} \longrightarrow 0
\end{eqnarray*}
as $k \rightarrow \infty$.

\end{enumerate}

Next, we introduce a dynamic quantity, the time average of quantum return probability, and its lower and upper decaying exponents, respectively, given by \cite{Barbaroux1} 

\begin{equation}\label{prob1}
\liminf_{t \to \infty} \frac{\log \biggr(\frac{1}{t}\int_0^t |\langle \xi, e^{-isT} \xi \rangle|^2 ds \biggr)}{\log t} = -D_2^+(\mu_\xi^T),  
\end{equation}

\begin{equation}\label{prob2}
\limsup_{t \to \infty} \frac{\log \biggr(\frac{1}{t}\int_0^t |\langle \xi, e^{-isT} \xi \rangle|^2 ds \biggr)}{\log t}  = -D_2^-(\mu_\xi^T),    
\end{equation}
where $D_2^-(\mu_\xi^T)$ and $D_2^+(\mu_\xi^T)$ denote, respectively, the lower and upper correlation dimensions of $\mu_\xi^T$ (see Theorem 2.2 in \cite{Barbaroux1}).

For every $\{e_n\}_{n \in {\mathbb{Z}}}$ orthonormal basis of~$\mathcal{H}$ and every $q>0$ we also introduce the (time-average) $q$-moment of the position operator at time $t>0$, with initial condition~$\xi$, 

\[\langle \langle |X|^q \rangle \rangle_{t,\xi} := \frac{1}{t} \int_0^t \sum_{n \in {\mathbb{Z}}} |n|^q |\langle e^{-isT}\xi ,e_n \rangle|^2~{\mathrm d}s.\]
Let the lower and upper transport exponents, respectively, given by

\begin{equation}\label{moments1}
\beta^-(\xi,q) :=  \liminf_{t \to \infty} \frac{ \log \langle \langle |X|^q \rangle \rangle_{t,\xi} }{ q \log  t},    
\end{equation}

\begin{equation}\label{moments2}
\beta^+(\xi,q) :=  \limsup_{t \to \infty} \frac{ \log \langle \langle |X|^q \rangle \rangle_{t,\xi} }{ q \log  t}.    
\end{equation}

In \cite{Carvalho1,Carvalho2} Carvalho and de Oliveira has been proven refinements of Wonderland Theorem for some classes of discrete self-adjoint operators; in particular, they have shown, for the spaces $X_a$ and $X_b$, that the set of operators whose lower and upper correlation dimensions are simultaneously zero and one (this corresponds to the distinct polynomially decaying rates for the  quantum return probability (\ref{prob1})-(\ref{prob2})), respectively, is generic \cite{Carvalho1} and that is true a dual result  for the transport exponents (which implies in distinct polynomially growth rates for the $q$-moments (\ref{moments1})-(\ref{moments2})) \cite{Carvalho2}. Thus, for $X_a$ and $X_b$ (from the topological viewpoint) the phenomenon of quantum intermittency is exceptional. In this paper we go beyond. Namely, we presented results that show this phenomenon, for $X_a$ and $X_b$, through of a more robust dynamic quantity (\ref{B}) than these discussed in \cite{Carvalho1,Carvalho2}; more specifically, thanks to the RAGE Theorem, we evaluate arbitrary decaying rates of (\ref{B}) (Theorem \ref{maintheorem1}). Moreover, in this work, we also developed an argument involving separability to extend partially such results to the class of (continuous) Schr\"{o}dinger operators $X_C$ (Theorem \ref{maintheorem2}).

We shall prove following 

\begin{theorem}\label{maintheorem1} Let $\alpha : {\mathbb{R}} \longrightarrow {\mathbb{R}}$ such that 
\[\limsup_{t \to \infty} \alpha(t) = \infty.\]
For every compact operator $A$ and every $\xi \in {\cal{H}}$ non null,
\begin{eqnarray*}
\{ T \in X_\Gamma \mid T ~has~purely~singular~continuous~spectrum,\\
\limsup_{t \to \infty} \alpha(t) \langle \vert A_\xi^T \vert \rangle_t = \infty~and~\liminf_{t \to \infty} t \langle \vert A_\xi^T \vert \rangle_t = 0 \}
\end{eqnarray*}
is a dense $G_\delta$ set in $X_\Gamma$ for $\Gamma \in \{a,b\}$.
\end{theorem}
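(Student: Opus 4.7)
I plan to follow the three-condition refinement of the Wonderland scheme used in \cite{Carvalho1,Carvalho2}: express the displayed set as the intersection of (i) ``$T$ purely singular continuous'', (ii) ``$\limsup_{t}\alpha(t)\langle|A_\xi^T|\rangle_t=\infty$'', and (iii) ``$\liminf_{t} t\langle|A_\xi^T|\rangle_t=0$'', prove that each is a dense $G_\delta$ subset of $X_\Gamma$, and apply the Baire category theorem.

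For the $G_\delta$ claims I will first establish that, for every fixed $s$ and $t>0$, the map $T\mapsto\langle|A_\xi^T|\rangle_t$ is continuous on $X_\Gamma$: strong resolvent convergence $T_k\to T$ yields $e^{-isT_k}\xi\to e^{-isT}\xi$ in norm uniformly on compact time intervals, compactness of $A$ upgrades this to norm convergence of $A\,e^{-isT_k}\xi$, and dominated convergence (with the uniform majorant $\|A\|\|\xi\|^2$) transports this to the time average. Condition (i) is $G_\delta$ by Simon's Wonderland argument, while (ii) and (iii) admit standard countable encodings such as
\[
\{T:\liminf_{t} t\langle|A_\xi^T|\rangle_t=0\}=\bigcap_{n\geq 1}\bigcap_{m\geq 1}\bigcup_{\tau\in\mathbb{Q},\,\tau\geq m}\{T:\tau\langle|A_\xi^T|\rangle_\tau<1/n\},
\]
each a $G_\delta$ in view of the continuity just recorded.

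For density, Baire reduces the task to verifying density of each of (i)--(iii) separately. Density of (i) is Simon's Wonderland theorem in the regular spaces $X_a$ and $X_b$. For (ii), I will approximate a prescribed $T_0\in X_\Gamma$ by an operator $\widetilde T$ carrying at least one eigenvalue whose eigenvector $\psi$ satisfies $\langle\psi,A\psi\rangle\neq 0$ and has nonzero spectral weight for $\xi$; the converse direction of RAGE then forces $\liminf_{t}\langle|A_\xi^{\widetilde T}|\rangle_t>0$, and unboundedness of $\alpha$ along a subsequence $t_n\to\infty$ drives $\alpha(t_n)\langle|A_\xi^{\widetilde T}|\rangle_{t_n}\to\infty$. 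For (iii), I will approximate $T_0$ by operators whose spectral measure $\mu_\xi^{\widetilde T}$ is ``thick'' enough, for instance carrying an absolutely continuous component with smooth density on which $\xi$ is cyclic, so that Fourier decay of $\hat\mu_\xi^{\widetilde T}$ pushes $|A_\xi^{\widetilde T}|$ into $L^1(\mathbb{R}_+)$ and yields a time subsequence along which $t_n\langle|A_\xi^{\widetilde T}|\rangle_{t_n}\to 0$.

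The principal technical obstacle I anticipate is the density step for (iii): I must exhibit, arbitrarily close to any given $T_0$ and uniformly in the compact operator $A$, an operator whose time-averaged expectation decays faster than $1/t$ along a subsequence, which requires quantitative control of the Fourier-type decay of the approximant's spectral measure. In $X_a$ I intend to build such an approximant via a finite-rank self-adjoint correction realising an interval of purely absolutely continuous spectrum cyclic for $\xi$ with $L^2$ density; in $X_b$ I will adapt the approximation-by-Jacobi-matrices-with-absolutely-continuous-spectrum argument of \cite{Carvalho1,Carvalho2} to the finer quantitative regime required here. Once density of (i)--(iii) is secured, Baire's theorem delivers the claimed dense $G_\delta$ subset of $X_\Gamma$.
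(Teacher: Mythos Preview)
Your global architecture matches the paper's: split the target set into the three conditions (i)--(iii), show each is a dense $G_\delta$ in $X_\Gamma$, and conclude by Baire. Your continuity argument (SR $\Rightarrow$ SD plus dominated convergence) and the countable $G_\delta$ encodings are essentially those used in the paper.

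The substantive divergence is in how density is obtained for (ii) and, more seriously, for (iii).

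For (ii) the paper does not construct a single eigenvalue tailored to $A$ and $\xi$. It invokes instead the density in $X_\Gamma$ of $D_\gamma=\{T:\sigma(T)=[-\gamma,\gamma]\ \text{is pure point}\}$, obtained from the Weyl--von Neumann theorem when $\Gamma=a$ and from Anderson localization when $\Gamma=b$ (this is the content of Remark~\ref{remarkmaintheorem1}(3)). For $T\in D_\gamma$ and $\xi\ne 0$ the spectral measure $\mu_\xi^T$ is pure point, so by the contrapositive of RAGE $\langle|A_\xi^T|\rangle_t\not\to 0$, and unboundedness of $\alpha$ along a subsequence forces the $\limsup$ to be infinite. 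Your plan is in the same spirit but needlessly delicate; the paper's route avoids having to tune an eigenvector against both $A$ and $\xi$.

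For (iii) there is a genuine gap in your proposal. If $|A_\xi^{\widetilde T}|\in L^1(\mathbb{R}_+)$ then
\[
t\,\langle|A_\xi^{\widetilde T}|\rangle_t=\int_0^t|A_\xi^{\widetilde T}|\,ds\ \longrightarrow\ \int_0^\infty|A_\xi^{\widetilde T}|\,ds>0,
\]
so $\liminf_{t\to\infty} t\,\langle|A_\xi^{\widetilde T}|\rangle_t$ equals that positive integral, not~$0$. No smoothness of the spectral density can repair this: you would need the integrand to vanish identically. In other words, with your construction the approximant $\widetilde T$ does \emph{not} lie in the set (iii), so it cannot witness density.

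The paper sidesteps this by never exhibiting an operator in $\{T:\liminf_t t\langle|A_\xi^T|\rangle_t=0\}$ directly. It introduces the auxiliary sets
\[
L_j:=\Bigl\{T\in X_\Gamma:\ \liminf_{t\to\infty} t^{1-1/j}\langle|A_\xi^T|\rangle_t=0\Bigr\},\qquad j\ge 1,
\]
notes that each $L_j$ is $G_\delta$ by the same continuity argument, and writes the set (iii) as $\bigcap_{j\ge 1}L_j$. The density input is then Last's quantitative bound (Theorem~\ref{theoLast}): if $\mu_\xi^T$ is Lipschitz, then $\langle|A_\xi^T|\rangle_t\le C_\xi\|A\|_1\,t^{-1}$, hence $t^{1-1/j}\langle|A_\xi^T|\rangle_t\le C_\xi\|A\|_1\,t^{-1/j}\to 0$, so the Lipschitz set $L$ is contained in every $L_j$. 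Density of $L$ in $X_\Gamma$ is imported from~\cite{Carvalho1}, making each $L_j$ a dense $G_\delta$, and Baire finishes. The two ingredients you are missing are therefore Last's theorem and the passage through the relaxed exponents $1-1/j$; replacing your $L^1$-approximation step by this scheme closes the gap.
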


\begin{remark}\label{remarkmaintheorem1}
\end{remark}

\begin{enumerate}

\item Theorem~\ref{maintheorem1} one says that, for $X_a$ and $X_b$, typically Baire's sense, expectation values of compact operators, in time average, have distinct decaying rates; in particular, in this case, typically, the rates with whom the trajectories escape, in time average, from every finite-dimensional subspace, depend on subsequences of time going to infinite. We emphasize that $\alpha$, in statement of Theorem~\ref{maintheorem1},  can be chosen arbitrarily and that is well known that, about rather general conditions on $A$, $T$ and $\xi$, for every $\epsilon>0$,    
\[\liminf_{t \to \infty} t^{1+\epsilon} \langle \vert A_\xi^T \vert \rangle_t = \infty.\]    

\item The proof from that 
\[\{ T \in X_\Gamma \mid \liminf_{t \to \infty} t \langle \vert A_\xi^T \vert \rangle_t = 0 \}\]
is a dense $G_\delta$ set in $X_\Gamma$ is a direct application of (\ref{prob1}) and (\ref{prob2}) (Theorem 2.2 in \cite{Barbaroux1}), Lemma 3.2 and Theorem 3.2 (Theorem \ref{theoLast}) in \cite{Last} and Theorems 1.2 and 1.4 in \cite{Carvalho1}. For the convenience of the reader, we presented in details a simple proof of this fact here.

\item The proof from that 
\[\{ T \in X_\Gamma \mid \limsup_{t \to \infty} \alpha(t) \langle \vert A_\xi^T \vert \rangle_t = \infty \}\]
is a dense $G_\delta$ set in $X_\Gamma$ is more delicate since that involves an arbitrary growth of $\alpha$. To prove such result we use the RAGE Theorem combined with the fact that, for some $\gamma(\Gamma)>0$, 
\[D_\gamma := \{T \in X_\Gamma \mid \sigma(T) =[-\gamma,\gamma]  {\rm ~is~pure~point}\}\]
is a dense set in $X_\Gamma$. For $X_\Gamma = X_a$, is a direct consequence from the Weyl-von Neumann Theorem \cite{von Neumann,Weyl} that $D_a$ is a dense set in $X_a$ (see proof of Theorem in 3.2 \cite{Simon} for details), whereas for $X_\Gamma = X_b$ is known that this can be obtained by using Anderson's localization. Namely, for every fixed $b > 0$, consider $\Omega = [-b,b]^{\mathbb{Z}}$ be endowed with the product topology and with the respective Borel $\sigma$-algebra. Assume that $(\omega_j)_{j \in \mathbb{Z}} = \omega \in \Omega$ is a set of independent, identically distributed real-valued random variables with a common probability measure $\rho$ not concentrated on a single point and so that $\int \vert \omega_j \vert^\theta {\mathrm d}\rho(w_j) < \infty$ for some $\theta>0$. Denote by $\nu:=\rho^{\mathbb{Z}}$ the probability measure on $\Omega$. The Anderson model is a random Hamiltonian on $\ell^2(\mathbb{Z})$, defined for each $\omega \in \Omega$ by
\[(h_\omega u)_j :=  u_{j-1} + u_{j+1} + \omega_j u_j.\]
It turns out that \cite{Aizenman,DamanikDDP,StolzIntr}  
\[\sigma(h_\omega) = [-2,2] + \text{\rm supp}(\rho),\]
and $\nu$-a.s. $\omega$, $h_\omega$ has pure point spectrum~\cite{carmona,von} (see also Theorem 4.5 in \cite{DamanikDDP}). Thus, if $\mu$ denotes the product of infinite copies of the normalized Lebesgue measure on $[-b,b]$, that is, $(2b)^{-1}\ell$, then 
\[D_{b+2} =\{M \in X_b \mid \sigma(M) = [-b-2,b+2], ~\sigma(M) \text{ is pure point}\}\]
is so that $\mu (X_b \backslash D_{b+2}) =0$.

\item We note that for $X_C$ the ideas presented above do not apply. Namely, this comes from the fact that this is a space of unbounded operators and, to the best of our knowledge, still no has been detailed in the literature arguments that show that 
\[\{H \in X_C \mid \sigma(H)  {\rm ~is~pure~point}\}\]
 is a dense set in $X_C$. In this context, in this work, we use the separability of $X_C$ to prove the following result. 
\end{enumerate}

\begin{theorem}\label{maintheorem2} Take $\alpha$ be as before. Then, for every compact operator $A$, there exists a dense $G_\delta$ set $G_\alpha(A)$ in ${\mathrm L}^2({\mathbb{R}})$ such that, for every $\xi \in G_\alpha(A)$, 
\begin{eqnarray*}
\{ H \in X_C \mid H ~has~purely~singular~continuous~spectrum~on~(0,\infty)\\
and~\displaystyle\limsup_{t \to \infty} \alpha(t) \langle \vert A_\xi^{H} \vert \rangle_t = \infty\}
\end{eqnarray*}
is a dense $G_\delta$ set in $X_C$.
\end{theorem}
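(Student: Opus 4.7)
The plan is to split the theorem into a Wonderland-type density statement for the spectral type on $(0,\infty)$ and a Baire-category construction of $G_\alpha(A)$, linked through a countable dense skeleton in the separable (in fact compact) space $X_C$. Throughout, the $G_\delta$ structure of the relevant sets will come from two continuity facts: for fixed $t$, the map $\xi\mapsto\langle|A_\xi^{H}|\rangle_t$ is continuous in $\xi$ (by boundedness of $e^{-isH}$ and $A$ plus dominated convergence), and for fixed $\xi$, the map $H\mapsto\langle|A_\xi^H|\rangle_t$ is continuous in $H$ (strong resolvent convergence gives strong convergence of $e^{-isH}\xi$, and compactness of $A$ upgrades this to norm convergence of $A e^{-isH}\xi$).

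First, I would establish a Wonderland-type statement that
\[ \mathcal{S} := \{H\in X_C : \sigma(H)\cap(0,\infty)\text{ is purely singular continuous}\} \]
is a dense $G_\delta$ in $X_C$. The $G_\delta$ part follows from strong resolvent continuity of the spectral projections of $H$ onto rational subintervals of $(0,\infty)$, combined with a countable dense $\{\xi_n\}\subset L^2(\mathbb{R})$ used to detect AC and point components of the spectral measures. Density is the analytic core: near any $V\in X_C$ one must exhibit $U$ (uniform-on-compacts close to $V$) with $H_U$ purely singular continuous on $(0,\infty)$; I would construct such $U$ by appending to $V$ a sparse or decaying-random tail outside a large ball, in the spirit of Pearson/Simon constructions adapted to continuous Schr\"odinger operators.

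Next, use that $X_C$ is compact (hence separable) to fix a countable dense subset $\{H_k\}_{k\ge 1}\subset X_C$ such that every $H_k$ admits an eigenfunction: perturbing any given $V\in X_C$ by a compactly supported attractive bump of small $L^\infty$-norm keeps it arbitrarily close to $V$ in the metric of $X_C$ and, by one-dimensional bound-state theory, always creates a bound state below $\inf\sigma_{\mathrm{ess}}$. For each $k$ set
\[ E_k := \Bigl\{\xi\in L^2(\mathbb{R}) : \limsup_{t\to\infty}\alpha(t)\,\langle|A_\xi^{H_k}|\rangle_t = \infty\Bigr\}; \]
$E_k$ is $G_\delta$ by the continuity in $\xi$ noted above. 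For density of $E_k$: given $\eta\in L^2(\mathbb{R})$ and $\epsilon>0$, take $\xi := \eta + \delta\zeta_k$, where $\zeta_k$ is a finite combination of eigenfunctions of $H_k$ on which $A$ acts nontrivially and $|\delta|\|\zeta_k\|<\epsilon$. Expanding $f(s) := \langle e^{-isH_k}\xi, Ae^{-isH_k}\xi\rangle$ in the spectral decomposition of $\xi$ with respect to $H_k$, applying the RAGE Theorem to the continuous-spectrum contribution, and observing that the remaining almost-periodic piece coming from the point spectrum has strictly positive time-average of its modulus, one obtains $\langle|A_\xi^{H_k}|\rangle_t\to c>0$; since $\limsup_t\alpha(t)=\infty$ this forces $\xi\in E_k$.

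Set $G_\alpha(A) := \bigcap_{k\ge 1}E_k$, a dense $G_\delta$ in $L^2(\mathbb{R})$ by Baire. For any $\xi\in G_\alpha(A)$, the set $\mathcal{D}(\xi) := \{H\in X_C : \limsup_{t\to\infty}\alpha(t)\langle|A_\xi^H|\rangle_t = \infty\}$ is $G_\delta$ by the continuity in $H$, and is dense because it contains the dense family $\{H_k\}_{k\ge 1}$; intersecting $\mathcal{D}(\xi)$ with $\mathcal{S}$ yields the required dense $G_\delta$ in $X_C$. The chief obstacle is the density half of the Wonderland step, since a density-of-pure-point-spectrum statement is unavailable in $X_C$ and Simon's original route cannot be used verbatim; the density of purely singular continuous spectrum on $(0,\infty)$ has to be produced by a direct spectral-theoretic construction. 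A secondary technical point is arranging the skeleton $\{H_k\}$ so that $A$ acts nontrivially on the point subspace of each $H_k$, which is needed for the positive-limit computation above and can be guaranteed by enlarging the class of admissible perturbations if necessary.
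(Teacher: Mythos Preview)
Your architecture coincides with the paper's: fix a countable dense family $\{H_k\}$ in the separable space $X_C$ consisting of operators with nonempty point spectrum, prove (this is the paper's Lemma~\ref{lemma}) that for each such operator the set $E_k$ of good vectors is a dense $G_\delta$ in $L^2(\mathbb{R})$, set $G_\alpha(A)=\bigcap_k E_k$ via Baire, observe that for $\xi\in G_\alpha(A)$ the set $\mathcal{D}(\xi)$ contains $\{H_k\}$ and is $G_\delta$, and finally intersect with the singular--continuous Wonderland set.

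Two differences in execution are worth noting. First, the paper does not reprove the Wonderland density on $(0,\infty)$; it simply invokes Simon's Theorem~4.5, so your proposed Pearson--type construction is not needed and what you call the ``chief obstacle'' is in fact already available. Second, your bound--state mechanism does not work as stated: the one--dimensional threshold result ($\int W<0\Rightarrow$ a negative eigenvalue) applies to $-\Delta+W$, not to $-\Delta+V+W$ with an arbitrary bounded $V$, and a small compactly supported attractive bump placed where $V$ is large and positive need not push anything below $\inf\sigma_{\mathrm{ess}}$. The paper instead uses
\[
V_k(x)=\frac{k}{k+1}\,\chi_{B(0,k)}(x)\,V(x)-\frac{C}{(k+1)(|x|+1)},
\]
so that $V_k\to V$ in $X_C$ while $V_k\le 0$ on $\{|x|\ge k\}$ with $\int_k^\infty V_k=-\infty$, which forces a negative eigenvalue. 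With this correction your argument goes through. Your secondary worry about $A$ acting nontrivially on the point subspace is absorbed in the paper into the converse direction of the RAGE Theorem: once $\mu_\xi^{H_k}$ has a point component one has $\langle|A_\xi^{H_k}|\rangle_t\not\to 0$, and then $\limsup_t\alpha(t)=\infty$ yields the conclusion directly, without having to isolate an almost--periodic piece explicitly.
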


The paper is organized as follows. In Section \ref{secGJ}, we present the proof of Theorem \ref{maintheorem1}. In section \ref{secSO}, we prove Theorem \ref{maintheorem2}.

Some words about notation: $\mathcal{H}$ denotes a separable complex Hilbert space. If~$A$ is a linear operator in~$\mathcal{H}$, we denote its domain by ${\mathcal{D}}(A)$, its spectrum by $\sigma(A)$, its point spectrum by $\sigma_p(A)$. If $\varrho(A)$ denotes its resolvent set, then the resolvent operator of $A$ at $\lambda \in \varrho(A)$ is denoted by $R_\lambda(A)$. For every set $\Omega \subset {\mathbb{R}}$, $\chi_\Omega$ denotes the characteristic function of the set $\Omega$. Finally, for every $x \in {\mathbb{R}}$ and $r>0$, $B(x,r)$ denotes the open interval $(x-r,x+r)$. 

%%%%%%%%%%%%%%%%%%%%%%%%%%%%%%%%%%%%%%%%%%%%%%%%%%%%%%%%%%%%--General--operators--and--Jacobi--Matrices--%%%%%%%%%%%%%%%%%%%%%%%%%%%%%%%%%%%%%%%%%%%%%%%%%%%%%%%%%%%%%%%%%%%%%%%%%%%%%%%%%%%%%%%%%%%%%%%%%%%%%%%%%%%%%%%%%%%%%%%%%%%%%%%%%%%%%%%%%%%%%%%%%%%%%%%%%%%%%%%%%%%%%%%%%%%%%%%%%%%%%%%%%%%%%%%%%%%%%%%%%%%%%%%%%%%%%%%%%%%%%%%%%%%%%%%%%%%%%%%

\section{General operators and Jacobi Matrices}\label{secGJ}

\noindent We need some preliminaries to present the proof of Theorem \ref{maintheorem1}. 

\begin{definition}{\rm A sequence of bounded linear operators $(T_n)$ strongly converges to $T$ in~$\cal{H}$ if, for every $\xi \in \cal{H}$, $T_n \xi \longrightarrow T \xi$ in $\cal{H}$.}
\end{definition}

Next, we present definitions of a sequence of (unbounded) self-adjoint operators~$(T_n)$ approaching another one~$T$.

\begin{definition}\label{convergencedef}{\rm Let $(T_n)$ be a sequence of self-adjoint operators and let $T$ be another self-adjoint operator. One says that: 
\begin{enumerate}
\item $T_n$~ converges to ~$T$ in the strong resolvent sense (SR) if $R_i(T_n)$ strongly converges to $R_i(T)$.
\item $T_n$~ converges to ~$T$ in the strong dynamical sense (SD) if, for each $t \in \mathbb{R}$, $e^{itT_n}$ strongly converges to $e^{itT}$.
\end{enumerate}}
\end{definition}

The next result shows that both notions of convergence are equivalent.

\begin{proposition}[Proposition 10.1.9 in \cite{Oliveira}]\label{SR-SDproposition} The SR and SD convergences of self-adjoint operators are equivalent.
\end{proposition}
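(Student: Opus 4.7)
The plan is to prove both implications separately: the direction SD$\Rightarrow$SR via a Laplace-type representation of the resolvent, and SR$\Rightarrow$SD via a Stone--Weierstrass argument combined with a compact cutoff.

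For SD$\Rightarrow$SR, I would use the spectral-theorem identity
\[ R_i(T)\xi \;=\; i\int_0^\infty e^{-s}\, e^{-isT}\xi\, ds, \]
which follows from the elementary formula $(\lambda-i)^{-1}=i\int_0^\infty e^{-s(1+i\lambda)}\,ds$, valid uniformly in $\lambda\in\mathbb{R}$. If $e^{-isT_n}\xi\to e^{-isT}\xi$ pointwise in $s$, the integrand is dominated in norm by the integrable function $s\mapsto e^{-s}\|\xi\|$, so Bochner's dominated convergence theorem yields $R_i(T_n)\xi\to R_i(T)\xi$.

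For the reverse direction, my first step would be to promote SR to $f(T_n)\xi\to f(T)\xi$ for every $f\in C_0(\mathbb{R})$ and every $\xi\in\mathcal{H}$. The set $\mathcal{F}$ of such $f$ is a norm-closed $*$-subalgebra of $C_0(\mathbb{R})$: closure under products follows from the uniform bound $\|f(T_n)\|\le\|f\|_\infty$; closure under conjugation follows from the fact that each $f(T_n)$ is normal, so $\|f(T_n)^{*}\xi\|=\|f(T_n)\xi\|\to\|f(T)\xi\|=\|f(T)^{*}\xi\|$, and norm convergence together with weak convergence of the adjoints forces strong convergence $f(T_n)^{*}\xi\to f(T)^{*}\xi$; closure under norm limits is a routine $\varepsilon/3$ argument. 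By hypothesis $(x-i)^{-1}\in\mathcal{F}$, hence also its conjugate $(x+i)^{-1}\in\mathcal{F}$; these two functions separate points of $\mathbb{R}$ and never vanish simultaneously, so the locally compact Stone--Weierstrass theorem gives $\mathcal{F}=C_0(\mathbb{R})$.

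The second step promotes this to the exponentials $x\mapsto e^{-itx}$, which do not lie in $C_0(\mathbb{R})$. Fix $\xi\in\mathcal{H}$ and a cutoff $\varphi_R\in C_c(\mathbb{R})$ with $0\le\varphi_R\le 1$ and $\varphi_R\equiv 1$ on $[-R,R]$. The product $e^{-it\cdot}\varphi_R$ lies in $C_0$, so the previous step gives $e^{-itT_n}\varphi_R(T_n)\xi\to e^{-itT}\varphi_R(T)\xi$; combined with $\varphi_R(T_n)\xi\to\varphi_R(T)\xi$, this upgrades to $e^{-itT_n}\varphi_R(T)\xi\to e^{-itT}\varphi_R(T)\xi$. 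For a tail bound that is uniform in $n$, I would observe that $h_R:=1-(1-\varphi_R)^2\in C_0$, whence
\[ \|(I-\varphi_R(T_n))\xi\|^2 \;=\; \|\xi\|^2-\langle\xi,h_R(T_n)\xi\rangle \;\longrightarrow\; \|(I-\varphi_R(T))\xi\|^2, \]
and the last quantity tends to $0$ as $R\to\infty$ by the functional calculus for $T$. Splitting $\xi=\varphi_R(T)\xi+(I-\varphi_R(T))\xi$ and using $\|e^{-itT_n}\|=1$ then closes the argument. I expect the main obstacle to lie precisely in this last step: since $e^{-itx}$ is not uniformly approximable on $\mathbb{R}$ by $C_0$ functions, the compact-support cutoff together with the $h_R$-identity is the essential technical device needed to produce a tail estimate uniform in $n$.
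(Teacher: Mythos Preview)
The paper does not supply its own proof of this proposition; it simply quotes it as Proposition~10.1.9 from de~Oliveira's textbook \cite{Oliveira}. Your argument is correct and is essentially the standard proof one finds in that reference: the Laplace-transform identity for the resolvent handles SD$\Rightarrow$SR, and the Stone--Weierstrass step promotes SR to strong convergence of $f(T_n)$ for all $f\in C_0(\mathbb{R})$, after which a compactly supported cutoff bridges the gap to the bounded exponentials $e^{-it\cdot}$.

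One small remark: once you have upgraded to $e^{-itT_n}\varphi_R(T)\xi\to e^{-itT}\varphi_R(T)\xi$ and you split $\xi=\varphi_R(T)\xi+(I-\varphi_R(T))\xi$, the remainder $\|(I-\varphi_R(T))\xi\|$ already does not depend on~$n$, so the $h_R$-identity controlling $\|(I-\varphi_R(T_n))\xi\|$ uniformly in~$n$ is not actually needed for the final estimate. It is correct but redundant here; it would be essential only if you had chosen to split using $\varphi_R(T_n)$ instead.
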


\begin{definition}{\rm Let $\mu$ be a $\sigma$-finite positive Borel measure on $\mathbb{R}$. One says that $\mu$ is Lipschitz continuous if there exists a constant $C>0$ such that, for each interval $I$ with $\ell(l) < 1$, $\mu(I) < C\, \ell(I)$, where $\ell(\cdot)$ denotes the Lebesgue measure on~$\mathbb{R}$.}
\end{definition}

We need the following 

\begin{theorem}[Theorem 3.2 in \cite{Last}]\label{theoLast} If $\mu_\xi^T$ is Lipschitz continuous, then there exists a constant $C_\xi$ such that for any compact operator $A$ and $t>0$
\[\langle \vert A_\xi^T \vert \rangle_t \, < \, C_\xi \Vert A \Vert_1 t^{-1}, \]
where $\Vert A \Vert_1$ denotes the trace norm of $A$.
\end{theorem}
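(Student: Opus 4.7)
I would reduce the inequality to a Plancherel-type bound by combining a singular value decomposition of $A$ with the Cauchy--Schwarz inequality. The non-trivial case is when $A$ is trace class (otherwise $\|A\|_1=\infty$ and the bound is vacuous). Writing $A=\sum_n s_n|u_n\rangle\langle v_n|$ with $\{u_n\},\{v_n\}$ orthonormal and $\sum_n s_n=\|A\|_1$, one has pointwise in $s$
\[
|\langle e^{-isT}\xi,Ae^{-isT}\xi\rangle|\le \sum_n s_n\,|\langle e^{-isT}\xi,u_n\rangle|\,|\langle v_n,e^{-isT}\xi\rangle|.
\]
Integrating on $[0,t]$, dividing by $t$, and applying Cauchy--Schwarz termwise gives
\[
\langle|A_\xi^T|\rangle_t \le \sum_n s_n\,F_t(u_n)^{1/2}F_t(v_n)^{1/2},\qquad F_t(\phi):=\frac{1}{t}\int_0^t|\langle e^{-isT}\xi,\phi\rangle|^2\,ds.
\]
The proof thus reduces to a uniform bound of the form $F_t(\phi)\le C_\xi\|\phi\|^2/t$.

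\textbf{The key lemma.} Let $\mathcal{H}_\xi$ be the cyclic subspace generated by $\xi$ under $T$. Since $e^{-isT}\xi\in\mathcal{H}_\xi$, I may replace $\phi$ by its projection onto $\mathcal{H}_\xi$, so assume $\phi\in\mathcal{H}_\xi$. Under the spectral unitary $\mathcal{H}_\xi\cong L^2(\mathbb{R},d\mu_\xi^T)$ sending $\xi\mapsto\mathbf{1}$, $\phi$ corresponds to some $g\in L^2(\mu_\xi^T)$ with $\|g\|_{L^2(\mu_\xi^T)}\le\|\phi\|$, and
\[
\langle e^{-isT}\xi,\phi\rangle=\int_{\mathbb{R}}e^{is\lambda}g(\lambda)\,d\mu_\xi^T(\lambda).
\]
Now Lipschitz continuity plus the Lebesgue differentiation theorem forces $\mu_\xi^T\ll\ell$ with density $\rho:=d\mu_\xi^T/d\ell\in L^\infty(\mathbb{R})$ of norm bounded by the Lipschitz constant. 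Hence the integral above is (up to a sign in $s$) the Fourier transform of $g\rho$, and
\[
\int_{\mathbb{R}}|g\rho|^2\,d\lambda\le\|\rho\|_\infty\int_{\mathbb{R}}|g|^2\rho\,d\lambda=\|\rho\|_\infty\|g\|_{L^2(\mu_\xi^T)}^2\le \|\rho\|_\infty\|\phi\|^2.
\]
Plancherel then yields $\int_{\mathbb{R}}|\langle e^{-isT}\xi,\phi\rangle|^2\,ds\le 2\pi\|\rho\|_\infty\|\phi\|^2$, and since the integrand is non-negative, $F_t(\phi)\le (2\pi\|\rho\|_\infty/t)\|\phi\|^2$. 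This is the lemma with $C_\xi=2\pi\|\rho\|_\infty$.

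\textbf{Conclusion and main obstacle.} Inserting the lemma into the SVD bound, and using $\|u_n\|=\|v_n\|=1$, yields $\langle|A_\xi^T|\rangle_t\le \sum_n s_n\,C_\xi/t=C_\xi\|A\|_1/t$. The one subtle point is the key lemma: a direct dyadic-shell estimate on $\frac{1}{t}\int_0^t|\widehat{g\,d\mu_\xi^T}(s)|^2\,ds$, which for $\alpha$-uniform H\"older measures yields $O(t^{-\alpha})$ in the spirit of Strichartz--Last, produces a logarithmic loss at the borderline $\alpha=1$. The observation that Lipschitz continuity of $\mu_\xi^T$ automatically upgrades to an $L^\infty$ Radon--Nikodym density lets one bypass the shell argument by a one-line Plancherel estimate on $g\rho\in L^2(\mathbb{R})$, delivering the clean $t^{-1}$ rate.
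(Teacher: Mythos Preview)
The paper does not give a proof of this statement: it is quoted verbatim as ``Theorem~3.2 in~\cite{Last}'' and used as a black box in the proof of Proposition~\ref{GMprop}. So there is no in-paper argument to compare against.

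Your argument is correct and self-contained. The reduction via the singular value decomposition and Cauchy--Schwarz to the scalar quantity $F_t(\phi)$ is standard and matches what Last does. Where you diverge from Last's original treatment is in the endgame: Last works in the general framework of uniformly $\alpha$-H\"older measures and invokes a Strichartz-type estimate to obtain $F_t(\phi)\lesssim t^{-\alpha}\|\phi\|^2$, then specializes to $\alpha=1$. You instead observe that the Lipschitz hypothesis forces $\mu_\xi^T=\rho\,d\ell$ with $\rho\in L^\infty$, which lets you replace the Strichartz machinery by a one-line Plancherel bound on $g\rho\in L^2(\mathbb{R})$. This is a genuine simplification in the $\alpha=1$ case (and, as you note, it sidesteps the logarithmic loss that a naive dyadic-shell argument would incur at the endpoint). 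The price is that your route does not generalize to $\alpha<1$, but for the statement as quoted in the present paper that is irrelevant.

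One cosmetic point: the interchange of the infinite SVD sum with the time integral should be justified; since the partial sums are dominated by $\|A\|\,\|\xi\|^2$ pointwise in $s$ and $\sum_n s_n=\|A\|_1<\infty$, Fubini/dominated convergence handles this without difficulty.
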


Now we confirm the phenomenon of quantum intermittency for regular spaces of self-adjoint operators about mild hypotheses.

\begin{proposition}\label{GMprop}  Let $\alpha$ as in the statement of Theorem \ref{maintheorem1} and let $\xi \in \cal{H}$ non null. Let $X$ be a regular space of self-adjoint operators. Suppose that: 

\begin{enumerate}

\item For some $\gamma>0$,  $D_\gamma := \{T \in X \mid \sigma(T) =  [-\gamma,\gamma]  ~is  ~pure  ~point\}$ is a dense set in $X$.

\item $L := \{T \in X \mid \mu_\xi^T ~is ~Lipschitz  ~continuous\}$ is a dense set in $X$. 

\end{enumerate}
Then, for any compact operator $A$,
\[\{ T \in X \mid  \displaystyle\limsup_{t \to \infty} \alpha(t) \langle \vert  A_\xi^T \vert \rangle_t = \infty \, and \, \displaystyle\liminf_{t \to \infty} t \langle \vert A_\xi^T \vert \rangle_t = 0 \}\] 
is a dense $G_\delta$ set in $X$.
\end{proposition}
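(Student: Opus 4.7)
The plan is to decompose the target set as $G_1 \cap G_2$ with
\[G_1 := \{T \in X : \limsup_{t\to\infty}\alpha(t)\langle|A_\xi^T|\rangle_t = \infty\}, \qquad G_2 := \{T \in X : \liminf_{t\to\infty} t\langle|A_\xi^T|\rangle_t = 0\},\]
and to show that each is a dense $G_\delta$ in $X$; Baire's theorem in the complete metric space $X$ will then finish the proof. The $G_\delta$ structure follows from the continuity of $T \mapsto \langle|A_\xi^T|\rangle_t$ for each fixed $t > 0$: regularity of $X$ yields SR-convergence, which by Proposition \ref{SR-SDproposition} is equivalent to SD-convergence, so $s \mapsto \langle e^{-isT}\xi, A e^{-isT}\xi\rangle$ depends continuously on $T$ pointwise in $s$, and dominated convergence with the uniform bound $\|A\|\|\xi\|^2$ transfers this continuity to the time average. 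Then
\[G_1 = \bigcap_{n \in \mathbb{N}}\bigcup_{t > n}\{T : \alpha(t)\langle|A_\xi^T|\rangle_t > n\}, \qquad G_2 = \bigcap_{n \in \mathbb{N}}\bigcup_{t > n}\{T : t\langle|A_\xi^T|\rangle_t < 1/n\}\]
displays each as a countable intersection of open sets.

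For the density of $G_1$, I would exploit hypothesis 1. Given any nonempty open $U \subset X$, pick $T \in D_\gamma \cap U$. Since $T$ is pure point, $s \mapsto \langle e^{-isT}\xi, Ae^{-isT}\xi\rangle$ is Bohr almost-periodic (as a uniform limit of trigonometric polynomials obtained from the eigen-expansion of $e^{-isT}\xi$), hence so is $|A_\xi^T(\cdot)|$, and the time average $\langle|A_\xi^T|\rangle_t$ therefore converges to some $c(T) \geq 0$ as $t \to \infty$. The RAGE Theorem forces $c(T) > 0$, since $\mu_\xi^T$ is purely atomic with total mass $\|\xi\|^2 > 0$ and in particular not purely continuous. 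Using $\limsup_{t\to\infty} \alpha(t) = \infty$, one can then find arbitrarily large $t > n$ with $\alpha(t) > 2n/c(T)$ and also in the tail where $\langle|A_\xi^T|\rangle_t > c(T)/2$, which yields $\alpha(t)\langle|A_\xi^T|\rangle_t > n$ and so places $T$ inside the $n$-th open factor of $G_1$. Hence $D_\gamma \subseteq G_1$, and density of $D_\gamma$ transfers to $G_1$.

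The density of $G_2$ is the main obstacle. Hypothesis 2 combined with Theorem \ref{theoLast} shows only that $t\langle|A_\xi^T|\rangle_t$ stays uniformly bounded (by $C_T\|A\|_1$) on the dense set $L$, whereas $G_2$ demands the liminf to be \emph{exactly} zero. To bridge this gap I would follow the outline in Remark \ref{remarkmaintheorem1}(2): invoke the Carvalho--de Oliveira genericity results (Theorems 1.2 and 1.4 of \cite{Carvalho1}), which produce a dense $G_\delta$ of $T$ for which the upper correlation dimension $D_2^+(\mu_\xi^T)$ attains its maximal value, and combine them with Lemma 3.2 of \cite{Last} and the characterizations (\ref{prob1})--(\ref{prob2}) to upgrade this dimensional information into an $o(1/t)$-type decay of $\langle|A_\xi^T|\rangle_t$ along a subsequence, now for an arbitrary compact operator $A$ rather than only rank-one projections. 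Once $G_2$ is dense, Baire's theorem delivers $G_1 \cap G_2$ as a dense $G_\delta$ in $X$, completing the argument.
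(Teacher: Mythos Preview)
Your treatment of $G_1$ matches the paper's: continuity of $T\mapsto\langle|A_\xi^T|\rangle_t$ via the SR$\Leftrightarrow$SD equivalence (Proposition~\ref{SR-SDproposition}) and dominated convergence, the $G_\delta$ rewriting, and then $D_\gamma\subset G_1$ through RAGE. The almost-periodicity argument you supply is more than the paper actually writes (it invokes RAGE in one line), but it is not wrong and in fact makes explicit why the time average stays bounded away from zero.

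The gap is in $G_2$. You correctly note that hypothesis~2 together with Theorem~\ref{theoLast} yields only $t\langle|A_\xi^T|\rangle_t\le C_\xi\|A\|_1$ on $L$, not $\liminf=0$. But your proposed fix is to import Theorems~1.2 and~1.4 of~\cite{Carvalho1}, and those results are proved for the concrete spaces $X_a$ and $X_b$; they are not available for an arbitrary regular space $X$ satisfying only hypotheses~1 and~2, so this route does not establish the proposition as stated. The paper's argument stays entirely within the given hypotheses. For each $j\ge1$ it sets
\[L_j:=\Bigl\{T\in X:\ \liminf_{t\to\infty}t^{\,1-1/j}\langle|A_\xi^T|\rangle_t=0\Bigr\}.\]
Each $L_j$ is a $G_\delta$ by the same continuity argument, and now the Lipschitz bound from Theorem~\ref{theoLast} gives $t^{\,1-1/j}\langle|A_\xi^T|\rangle_t\le C_\xi\|A\|_1\,t^{-1/j}\to0$ for every $T\in L$, so $L\subset L_j$ and each $L_j$ is dense. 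Baire's theorem then makes $\bigcap_{j\ge1}L_j$ a dense $G_\delta$, which the paper identifies with $G_2$. The device you missed is exactly this: shaving the exponent from $1$ to $1-1/j$ converts the mere boundedness delivered by Theorem~\ref{theoLast} into genuine decay to zero, and the passage $j\to\infty$ (via Baire, not via a limit in the exponent) recovers the full statement.
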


\begin{proof} Since, for each $t \in {\mathbb{R}}$, by Proposition \ref{SR-SDproposition} and dominated convergence, the mapping 
\[X \ni T \mapsto \alpha(t) \langle \vert A_\xi^T \vert \rangle_t\]
is continuous, it follows that, for each $k \geq 1$ and each $n \geq 1$, the set
\[\bigcup_{t \geq k} \{ T \in X \mid \alpha(t) \langle \vert A_\xi^T \vert \rangle_t > n \} \]
is open, from which follows that 
\[\{ T \in X \mid  \displaystyle\limsup_{t \to \infty} \alpha(t) \langle \vert  A_\xi^T \vert  \rangle_t = \infty\} = \bigcap_{n \geq 1} \bigcap_{k \geq 1} \bigcup_{t \geq k} \{ T \in X \mid \alpha(t) \langle \vert A_\xi^T \vert  \rangle_t > n \} \]
is a $G_\delta$ set in $X$. By RAGE Theorem, $D_\gamma \subset \{ T \in X \mid  \displaystyle\limsup_{t \to \infty} \alpha(t) \langle \vert A_\xi^T \vert \rangle_t = \infty\}$. Hence, $\{ T \in X \mid  \displaystyle\limsup_{t \to \infty} \alpha(t) \langle \vert  A_\xi^T \vert \rangle_t = \infty\}$ is a dense $G_\delta$ set in $X$.

We note that, for each $j \geq 1$, 
\[L_j := \{ T \in X \mid \displaystyle\liminf_{t \to \infty} t^{1-\frac{1}{j}} \langle \vert A_\xi^T \vert \rangle_t = 0 \}\] 
is also a $G_\delta$ set in $X$. Since, by Theorem \ref{theoLast}, for each $j \geq 1$, $L \subset L_j$, it follows that, by Baire's Theorem, 
\[\{ T \in X \mid \displaystyle\liminf_{t \to \infty} t \langle \vert  A_\xi^T \vert  \rangle_t = 0 \} =  \bigcap_{j \geq 1} L_j\]
is a  dense $G_\delta$ set in $X$, concluding the proof of proposition.
\end{proof}

\begin{proof} [{Proof} {\rm (Theorem~\ref{maintheorem1})}] As each $T \in X_\Gamma$ can be approximated by an operator whose spectral measures are Lipschitz continuous (see proof of Theorems 1.2 and 1.4 in \cite{Carvalho1} for details). The theorem is now a direct consequence of Remark \ref{remarkmaintheorem1}, Proposition \ref{GMprop}, Theorems 3.1 and 4.1 in \cite{Simon} and Baire's Theorem.
\end{proof}

%%%%%%%%%%%%%%%%%%%%%%%%%%%%%%%%%%%%%%%%%%%%%%%%%%%%%%%%%%%%%%%%%%%%%--Schr\"{o}dinger--operators--%%%%%%%%%%%%%%%%%%%%%%%%%%%%%%%%%%%%%%%%%%%%%%%%%%%%%%%%%%%%%%%%%%%%%%%%%%%%%%%%%%%%%%%%%%%%%%%%%%%%%%%%%%%%%%%%%%%%%%%%%%%%%%%%%%%%%%%%%%%%%%%%%%%%%%%%%%%%%%%%%%%%%%%%%%%%%%%%%%%%%%%%%%%%%%%%%%%%%%%%%%%%%%%%%%%%%%%%%%%%%%%%%%%%%%%%%%%%%%%%%%%%%%%%%

\section{Schr\"{o}dinger operators}\label{secSO}

\noindent In order to prove Theorem~\ref{maintheorem2} we need of result to follow and a fine separability argument (check below).

\begin{lemma}\label{lemma}Let $T$ be a self-adjoint operator so that $\sigma_p(T) \not = \emptyset$ and let $\alpha$ as in the statement of Theorem \ref{maintheorem1} . Then, for any compact operator $A$
\[G_\alpha(A,T) := \{ \xi \in {\cal{H}} \mid \limsup_{t \to \infty} \alpha(t) \langle \vert A_\xi^T \vert \rangle_t = \infty\}\]
is a  dense $G_\delta$ set in $\cal{H}$.
\end{lemma}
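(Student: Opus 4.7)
The $G_\delta$ part is routine. For each fixed $t>0$, the map $\xi\mapsto\langle|A_\xi^T|\rangle_t$ is continuous on $\mathcal{H}$ by dominated convergence (the integrand is jointly continuous in $\xi$ and dominated by $\|A\|\,\|\xi\|^2$), so
\[
G_\alpha(A,T)=\bigcap_{n\ge1}\bigcap_{k\ge1}\bigcup_{t\ge k}\bigl\{\xi\in\mathcal{H}:\alpha(t)\,\langle|A_\xi^T|\rangle_t>n\bigr\}
\]
displays $G_\alpha(A,T)$ as a countable intersection of open sets. By Baire's theorem, density reduces to showing that each inner open set is dense; it suffices to prove the stronger fact that $\{\xi:\liminf_{t\to\infty}\langle|A_\xi^T|\rangle_t>0\}$ is dense in $\mathcal{H}$, since $\limsup_{t\to\infty}\alpha(t)=\infty$ then automatically places any such $\xi$ in $G_\alpha(A,T)$.

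To produce this dense set I would invoke $\sigma_p(T)\neq\emptyset$ to pick unit eigenvectors $\psi_i$ of $T$ with eigenvalues $\lambda_i$. For any $\xi_p=\sum_{i=1}^N c_i\psi_i$ with distinct $\lambda_i$, the function
\[
A_{\xi_p}^T(t)=\sum_{i,j}c_i\bar c_j\,e^{-i(\lambda_i-\lambda_j)t}\,\langle\psi_j,A\psi_i\rangle
\]
is Bohr almost periodic, so $\langle|A_{\xi_p}^T|\rangle_t$ converges to the Bohr mean of $|A_{\xi_p}^T|$, which is strictly positive whenever $A_{\xi_p}^T\not\equiv 0$. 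Given $\eta\in\mathcal{H}$ and $\epsilon>0$, I approximate the pure-point part $P_p\eta$ by a finite eigenvector sum and then make an arbitrarily small tweak of its coefficients, landing outside the proper algebraic variety $\{(c_i):A_{\xi_p}^T\equiv 0\}$, to obtain $\xi_p$ with $A_{\xi_p}^T\not\equiv 0$. The candidate approximant is $\xi:=\xi_p+(I-P_p)\eta$, which is $\epsilon$-close to $\eta$.

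Finally, I show the continuous remainder has vanishing Ces\`aro mean. Writing $\xi_c:=(I-P_p)\eta\in\mathcal{H}_c(T)$,
\[
A_\xi^T-A_{\xi_p}^T=A_{\xi_c}^T+\bigl\langle e^{-itT}\xi_p,A e^{-itT}\xi_c\bigr\rangle+\bigl\langle e^{-itT}\xi_c,A e^{-itT}\xi_p\bigr\rangle.
\]
By the RAGE theorem applied to the compact $A$ and $\xi_c\in\mathcal{H}_c(T)$, one has $\langle|A_{\xi_c}^T|\rangle_t\to 0$; and each cross term is a finite linear combination of functions $e^{it\lambda_i}\langle A^*\psi_i,e^{-isT}\xi_c\rangle$ whose Ces\`aro-averaged squared modulus, by the Wiener identity, equals the total atomic mass of the cross spectral measure $\mu^T_{A^*\psi_i,\xi_c}$; this vanishes because $\mu^T_{A^*\psi_i,\xi_c}(\{x\})=\langle A^*\psi_i,E_{\{x\}}\xi_c\rangle=0$ for $\xi_c\in\mathcal{H}_c(T)$, and Cauchy--Schwarz then forces the Ces\`aro mean of the modulus to $0$ as well. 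Hence by the reverse triangle inequality, $\liminf_{t\to\infty}\langle|A_\xi^T|\rangle_t$ is at least the Bohr mean of $|A_{\xi_p}^T|$, which is positive.

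The delicate step is guaranteeing that the perturbation $\xi_p$ can be chosen so that $A_{\xi_p}^T\not\equiv 0$ for every nonzero compact $A$: when some diagonal matrix element $\langle\psi_i,A\psi_i\rangle$ is nonzero this is immediate, but in the degenerate case where $A$ kills the pure-point subspace diagonally one must couple two eigenvectors with distinct eigenvalues in order to produce a non-vanishing off-diagonal coefficient $c_i\bar c_j\langle\psi_j,A\psi_i\rangle$; this is the one place where the richness of $\sigma_p(T)$ and the non-triviality of the compact perturbation must be exploited together.
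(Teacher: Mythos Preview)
Your $G_\delta$ argument matches the paper's. For density, the paper takes a much shorter route than you do: it fixes one unit eigenvector $\xi_0$ with eigenvalue $\lambda$; for an arbitrary $\xi$, either $\mu_\xi^T(\{\lambda\})>0$ already, or else the perturbations $\xi_k:=\xi+\tfrac{1}{k}\xi_0$ satisfy $\mu_{\xi_k}^T(\{\lambda\})>0$ and converge to $\xi$. Thus $G:=\{\xi:\mu_\xi^T\text{ has a point component}\}$ is dense, and the paper finishes by invoking the converse implication of the RAGE Theorem (in the form stated earlier in the paper) to get $G\subset G_\alpha(A,T)$ in one line. Your proposal instead tries to establish $\liminf_{t\to\infty}\langle|A_\xi^T|\rangle_t>0$ from scratch via the almost-periodic structure of $A_{\xi_p}^T$ and a Wiener-type treatment of the cross terms---in effect, re-proving that converse direction of RAGE by hand. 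The cross-term analysis you give is sound, but the payoff is the same as what the paper obtains by citing RAGE.

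The genuine gap is precisely where you flag it. The ``delicate step'' of arranging $A_{\xi_p}^T\not\equiv0$ cannot be completed in the stated generality: if $P_pAP_p=0$ (with $P_p$ the projection onto $\mathcal{H}_p(T)$), then $\langle\psi_j,A\psi_i\rangle=0$ for all eigenvectors and hence $A_{\xi_p}^T\equiv0$ for \emph{every} $\xi_p\in\mathcal{H}_p(T)$, no matter how many eigenvectors you couple. Your fallback of producing a nonzero off-diagonal coefficient $c_i\bar c_j\langle\psi_j,A\psi_i\rangle$ also presupposes at least two distinct eigenvalues, which the hypothesis $\sigma_p(T)\neq\emptyset$ alone does not guarantee. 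So as written, your density argument is incomplete. The paper's proof does not engage with this issue at all; it simply packages the needed implication into its stated version of the RAGE Theorem and cites it.
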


\begin{proof} Since, for each $t \in {\mathbb{R}}$, by dominated convergence, the mapping 
\[{\cal{H}} \ni \xi \mapsto \alpha(t) \langle \vert A_\xi^T \vert  \rangle_t\]
is continuous, it follows that
\[G_\alpha(T,A) = \bigcap_{n \geq 1} \bigcap_{k \geq 1} \bigcup_{t \geq k} \{ \xi \in {\cal{H}} \mid \alpha(t) \langle \vert  A_\xi^T \vert \rangle_t  > n \} \]
is a $G_\delta$ set in $\cal{H}$.

Given $\xi  \in \cal{H}$, write $\xi = \xi_1 + \xi_2$, with $\xi_1 \in Span\{\xi_0\}^\perp$ and $\xi_2 \in Span\{\xi_0\}$, where $\xi_0$, with $\Vert \xi_0 \Vert_{\cal{H}} =1$, is an eigenvector of $T$ associated with an eigenvalue $\lambda$. If $\xi_2 \not =  0$, then 
\begin{eqnarray*}
\nonumber \mu_\xi^T (\{\lambda\}) &=&  \Vert P^T(\{\lambda\})\xi \Vert_{\cal{H}}^2\\  \nonumber &\geq& 2Re\langle P^T(\{\lambda\})\xi_1,P^T(\{\lambda\})\xi_2 \rangle + \Vert P^T(\{\lambda\})\xi_2 \Vert_{\cal{H}}^2\\ &=&  \Vert \xi_2 \Vert_{\cal{H}}^2 > 0, 
\end{eqnarray*}
where $P^T(\{\lambda\})$ represents the spectral resolution of $T$ over the set $\{\lambda\}$. Now, if $\xi_2 =  0$, define, for each $k\geq1$,
\[\xi_k  := \xi + \frac{\xi_0}{k}.\]
It is clear that $\xi_k \rightarrow \xi$. Moreover, by the previous arguments, for each $k\geq1$, one has
\[ \mu_{\xi_k}^T (\{\lambda\}) > 0.\]
Thus, $G:=\{\xi \in {\cal{H}} \mid \mu_{\xi}^T$ has point component$\}$ is a dense set in $\cal{H}$. Nevertheless, by RAGE Theorem, $G \subset G_\alpha(T,A)$. Hence, $G_\alpha(T,A)$ is a dense $G_\delta$ set in $\cal{H}$.
\end{proof}

\begin{proof}[{Proof} {\rm (Theorem~\ref{maintheorem2})}] By the arguments presented in the proof of Proposition~\ref{GMprop}, for every $\xi \in {\mathrm L}^2({\mathbb{R}})$,
\[\{ H_V \in X_C \mid  \displaystyle\limsup_{t \to \infty} \alpha(t) \langle \vert A_\xi^{H_V} \vert \rangle_t = \infty\}\] 
is a $G_\delta$ set in $X_C$.

Now given $H_V \in X_C$, we define, for every $k \geq 1$,
\[V_k(x)  := \frac{k}{k+1}\chi_{B(0,k)} V(x) - \frac{C}{(k+1)(\vert x \vert+1)}.\]
We note that, for each $k \geq 1$ and each $x \geq k$, $V_k(x) \leq 0$. Moreover, for each $k \geq 1$,
\[\int_k^\infty V_k(x) = -\infty.\]
Therefore, for every $k \geq 1$, $H_{V_k}$ has at least a negative eigenvalue \cite{Schechter}; in particular, $\sigma_p(H_{V_k}) \not = \emptyset$. Since $H_{V_k} \rightarrow H_V$ in $X_C$, it follows that 
\[Y := \{H_V \in X_C \mid \sigma_p(H_V) \not = \emptyset  \}\] 
is a dense set in $X_C$.

Now, let $(H_{V_k})$ be an enumerable  dense subset  in $Y$ (which is separable, since $X_C$ is separable); then,  by Lemma \ref{lemma} and Baire's Theorem, $\bigcap_{k\geq 1} G_\alpha(H_{V_k},A)$  is a dense $G_\delta$ set in ${\mathrm L}^2({\mathbb{R}})$. Moreover, for every $\xi \in \bigcap_{k\geq 1} G_\alpha(H_{V_k},A)$, 
\[\{ H_V \in X_C \mid  \displaystyle\limsup_{t \to \infty} \alpha(t) \langle \vert A_\xi^{H_V} \vert \rangle_t = \infty\} \supset \bigcup_{k\geq 1} \{ H_{V_k} \}\] 
is a dense $G_\delta$ set in $X_C$. The theorem is now a consequence of Theorem 4.5 in \cite{Simon} and Baire's Theorem.
\end{proof}

\begin{remark}{\rm Note as this separability argument used in the proof of Theorem \ref{maintheorem1}, in a sense, has allowed to use a typical behaviour in ${\mathrm L}^2({\mathbb{R}})$ (Lemma \ref{lemma}) to find a typical behaviour in $X_C$.}
\end{remark}

%%%%%%%%%%%%%%%%%%%%%%%%%%%%%%%%%%%%%%%%%%%%%%%%%%%%%%%%%%%%%%%%%%%%%%%%%%--Acknowledgments--%%%%%%%%%%%%%%%%%%%%%%%%%%%%%%%%%%%%%%%%%%%%%%%%%%%%%%%%%%%%%%%%%%%%%%%%%%%%%%%%%%%%%%%%%%%%%%%%%%%%%%%%%%%%%%%%%%%%%%%%%%%%%%%%%%%%%%%%%%%%%%%%%%%%%%%%%%%%%%%%%%%%%%%%%%%%%%%%%%%%%%%%%%%%%%%%%%%%%%%%%%%%%%%%%%%%%%%%%%%%%%%%%%%%%%%%%%%%%%%%%%%%%%%%%%%%%%%%

\begin{center} \Large{Acknowledgments} 
\end{center}

M.A. was supported by CAPES (a Brazilian government agency; Contract 88882.184169/2018-01). The author is grateful to César R. de Oliveira and Silas L. Carvalho for fruitful discussions and helpful remarks. 

%%%%%%%%%%%%%%%%%%%%%%%%%%%%%%%%%%%%%%%%%%%%%%%%%%%%%%%%%%%%%%%%%%%%%%%%%%%%%%%%%%%%%%%%%%%%%%%%%%%%%%%%%%%%%%%%%%%%%%%%%%%%%%%%%%%%%%%%%%%%%%%%%%%%%%%%%%%%%%%%%%%%%%%%%%%%%%%%%%%%%%%%%%%%%%%%%%%%%%%%%%%%%%%%%%%%%%%%%%%%%%%%%%%%%%%%%%%%%%%%%%%%%%%%%%--thebibliography--%%%%%%%%%%%%%%%%%%%%%%%%%%%%%%%%%%%%%%%%%%%%%%%%%%%%%%%%%%%%%%%%%%%%%%%%%%%%%%%%

\noindent  Email: ec.moacir@gmail.com, Departamento de Matem\'atica, ~UFMG, Belo Horizonte, MG, 30161-970 Brazil

\end{document}